\documentclass[11pt]{article}  

\usepackage{amsbsy,amsmath,amsthm,amssymb,color,verbatim,ulem,color}

 \usepackage{bm}
 \usepackage{fullpage,cancel}
 \usepackage{float}
 \usepackage{wrapfig}
 \usepackage{caption}
 \usepackage{subcaption}
    \captionsetup[subfigure]{labelfont=rm}
 \usepackage{multicol}
 \usepackage{multirow}
 \usepackage{longtable}
 \usepackage{makecell}
 \usepackage{supertabular}
 \usepackage{tikz}
    \usetikzlibrary{positioning}
 \usepackage{enumitem}
 \usepackage{ulem}

\newtheorem{theorem}{Theorem}

\theoremstyle{definition}

\usepackage{thmtools}
\usepackage{thm-restate}

\usepackage[colorlinks = true,
linkcolor = blue,
urlcolor  = blue,
citecolor = blue,
anchorcolor = blue]{hyperref}
\usepackage{cleveref}

\newcommand{\PF}[1]{\mathrm{PF}_{#1}}
\newcommand{\dPF}[2]{\mathrm{PF}_{#1}(#2)}
\newcommand{\ToH}{\mathrm{TH}}

\begin{document}
	
	\title{On Parking Functions and\\ The Tower of Hanoi}
	\markright{Parking Functions and The Tower of Hanoi}
	\author{Yasmin Aguillon$^1$, Dylan Alvarenga$^2$, Pamela E. Harris$^3$,\\ Surya Kotapati$^3$, J. Carlos Mart\'{i}nez Mori$^4$, Casandra D. Monroe$^5$,\\ Zia Saylor$^3$, Camelle Tieu$^6$, and Dwight Anderson Williams II$^7$}
	\date{%
    $^1$Swarthmore College\\%
    $^2$Cal Poly Pomona\\%
    $^3$Williams College\\%
    $^4$Cornell University\\%
    $^5$The University of Texas at Austin\\%
    $^6$University of California, Irvine\\%
    $^7$MathDwight\\[2ex]%
    \today
}
	\maketitle
	
	\begin{abstract}
		The displacement of a parking function measures the total difference between where cars want to park and where they ultimately park. In this article, we prove that the set of parking functions of length $n$ with displacement one is in bijection with the set of ideal states in the famous Tower of Hanoi game with $n+1$ disks and $n+1$ pegs, both sets being enumerated by the Lah numbers. 
	\end{abstract}
	
	\section{Introduction.}
	Fix $n\in\mathbb{N}=\{1,2,3,\ldots\}$ and consider $n$ cars entering sequentially to park on a one-way street consisting of $n$ spots numbered consecutively from 1 to $n$.
	The cars enter the street one at a time
	and first drive to their preferred spot, where they park if unoccupied. 
	If a car's preferred spot is occupied, then it continues down the street, parking in the first available spot it encounters.
	For each $i\in[n]:=\{1,2,\ldots,n\}$, we let $a_i\in[n]$ be the preferred parking spot of car $i$; appropriately, $\alpha=(a_1,a_2,\ldots,a_n)\in[n]^n$ is called a preference vector. 
	If all cars can park given the preference vector $\alpha$, then we say that $\alpha$ is a \textit{parking function} of length $n$. 
	For example, $(3,1,1,3,2)$ is a parking function of length 5 since car 1 parks in spot 3, car 2 parks in spot 1, car 3 parks in spot 2, car 4 parks in spot 4, and car 5 parks in spot 5. 
	On the other hand, $(3,4,2,3)$ is not a parking function, as no car prefers spot 1, and hence the cars cannot all park in the four available spots.
	We remark that parking functions do not require that all cars park where they prefer; indeed, all that matters is that every car can park within the $n$ parking spots on the one-way street. 
	Throughout, we let $\PF{n}$ denote the set of all parking functions of length $n$ and $|\PF{n}|$ denote the number of parking functions of length $n$. Konheim and Weiss first introduced parking functions in 1966 in their study \cite{konheimOccupancyDisciplineApplications1966} of hashing functions. They established that $|\PF{n}|=(n+1)^{n-1}$ is the number of parking functions of length $n$ (in \textit{loc. cit. denoted $T_{n}$}).

	Gessel and Seo \cite{gesselRefinementCayleyFormula2006} classify parking functions based on the number of \textit{lucky} cars --- those that park in their preferred spot. 
	Note that parking functions that are permutations of $[n]$ are the luckiest of all, as every car parks in its preferred spot. 
	In contrast, unless $n=1$, the parking function consisting of all ones is as unlucky as can be, since no car except the first can park in its preferred spot. For more on parking functions, we point the interested reader to \cite{yan2015parking}.

	An alternative to the counting of lucky cars, our work is motivated by a related but separate statistic measuring a type of holistic luckiness of a parking function: the \textit{displacement of a parking function}. To begin, we note that the \textit{displacement} of car $i$ under the parking function $\alpha$ is defined as the difference between the number of the spot where a car actually parks and its preferred spot. If this value is $k_i$, then we say that car $i$ has been \textit{bumped} $k_i$ spots down the road. 
	So the displacement of a parking function $\alpha$, which we denote by $d(\alpha)$, is the total amount all cars are bumped under this parking function. Precisely, the displacement of a parking function $\alpha$ is given by
	$d(\alpha):=\sum_{i=1}^nk_i$,
	where $k_i$ denotes the displacement of car $i$ when parking under $\alpha$.
	In this way, 
	the displacement of a parking function $\alpha$ is an integral value that quantifies how ``unlucky'' $\alpha$ is. 
	If $\alpha$ is a permutation of $[n]$, then $d(\alpha)=0$.
	In all other cases, 
	the displacement satisfies $d>0$. 
	In particular, if $\alpha$ is the preference vector consisting of all ones, then $d(\alpha)=\frac{n(n-1)}{2}$.
	
	We remark that displacement of a parking function does not determine the number of (un)lucky cars in a parking function. 
	For example, a parking function $\alpha$ may satisfy $d(\alpha)=7$, but this does not provide any indication as to whether a single car has a displacement of seven or if seven cars have a displacement of one. Nonetheless, in comparing the parking functions $(1, 2, 2)$ and $(1, 2, 1)$, each with one unlucky car, we see the former has displacement one, whereas the latter has displacement two, illustrating the distinction between measuring ``luckiness'' of a parking function and counting unlucky cars. In what follows, we let $\dPF{n}{d}$ denote the set of parking functions of length $n$ that have exactly $d$ bumps (displacement $d$) and $|\dPF{n}{d}|$ denote the number of such parking functions.
	
	Our main contribution in this paper provides a connection between the displacement statistic on parking functions and the Tower of Hanoi --- a mathematical puzzle invented by the French mathematician \'{E}douard Lucas in 1883 \cite{hinzTowerHanoiMyths2018}.
	We focus on the $(n+1)\times (n+1)$ Tower of Hanoi game, where there are $n+1$ disks labeled 0 to $n$ by increasing size and $n+1$ pegs labeled 0 to $n$ from left to right.
	In the game's starting position, the $n+1$ disks begin stacked on the first peg and are arranged in descending order, with disk $n$ on the bottom and disk 0 on the top.
	We refer to the first peg (labeled 0) as the \textit{source} peg and the last peg (labeled $n$) as the \textit{destination} peg.
	Moreover, all pegs in between the source and destination pegs are the \textit{interior} pegs.

	The objective of this game is to move all $n+1$ disks from the source peg to the destination peg, ending with the disks in the same order as in the starting position. Thus $n \geq 2$, as there must be at least three pegs/disks to consider when the number of pegs equals the number of disks.
	We illustrate the starting and ending positions for $n=3$ in Figure~\ref{fig: positions}.
	
	\begin{figure}[ht]
		\centering
		\begin{subfigure}{.45\linewidth}
			\centering
			\includegraphics[width=0.89\linewidth]{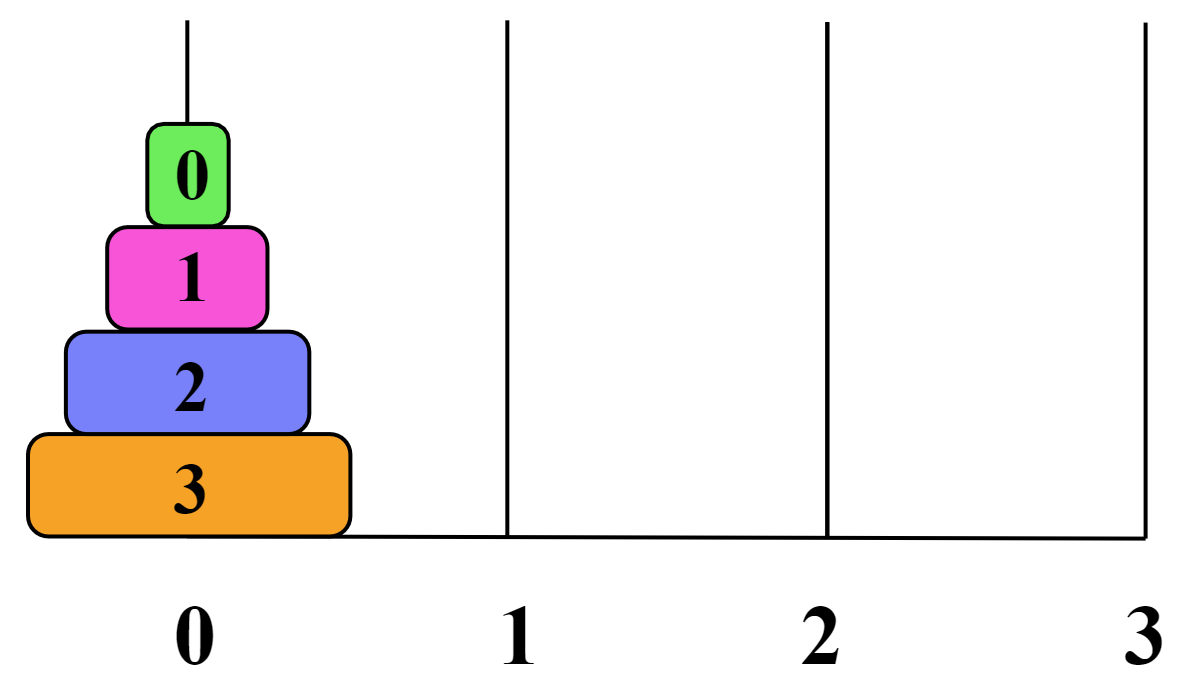}
			\caption{Starting position.}
			\label{fig: positions a}
		\end{subfigure}
		\begin{subfigure}{.45\linewidth}
			\centering
			\includegraphics[width=0.89\linewidth]{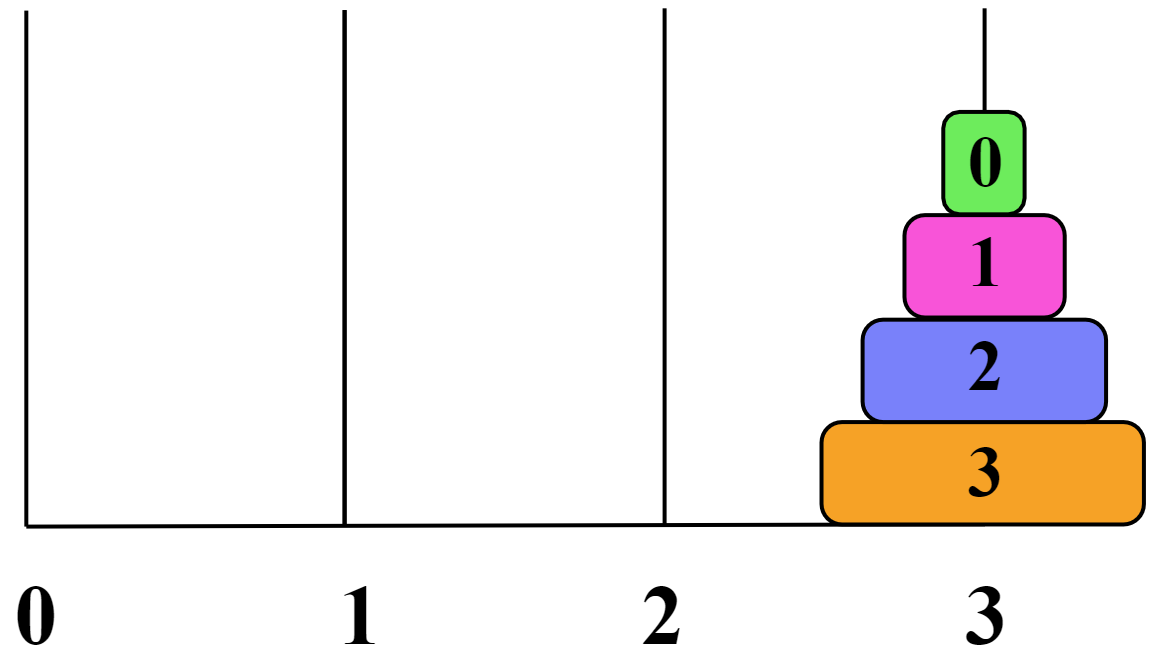}
			\caption{Ending position.}
			\label{fig: positions b}
		\end{subfigure}
		\caption{Starting position (\ref{fig: positions a}) and ending position (\ref{fig: positions b}) in the $(3 + 1)\times (3 + 1)$ Tower of Hanoi game.}
		\label{fig: positions}
	\end{figure}

	Moving a disk from one peg to another must follow two rules. 
	First, we can only move one disk at a time. 
	Second, we can never place a larger disk on top of a smaller one. 
	At the completion of each move, we refer to the arrangement of the disks on the pegs as a \textit{state}.
	In this way, we also refer to the initial and ending positions as the initial and ending states of the game, respectively. We use the term \textit{strategy} for the collection of moves made during a single game or, equivalently, the collection of states reached throughout a game.
	A particular intermediate state (between initial and ending states) is an \textit{ideal state}, which is an arrangement of $n+1$ disks with the following properties: 
	\begin{itemize}
		\item the largest disk (labeled $n$) is alone on the source peg (labeled 0),
		\item the destination peg (labeled $n$) is the only empty peg, and 
		\item the remaining $n$ disks are on the $n-1$ interior pegs in such a way that each interior peg has at least one disk; hence, exactly one peg has two disks.
	\end{itemize} 
	
	We let $\ToH_{n+1}$ denote the set of all distinct ideal states in the $(n+1)\times(n+1)$ Tower of Hanoi game. The ideal states in a $(n+1)\times(n+1)$ Tower of Hanoi game are significant in achieving an \textit{optimal strategy}, which is a winning strategy with the minimum number of moves possible: If a strategy places a game in an ideal state after exactly $n+1$ moves, then such a strategy guarantees that the remainder of the game can be won after $n+2$ more moves, $2n+3$ moves being the minimum number of moves to win \cite[see the proof of Proposition 2]{klavvzar2005hanoi}. 
	Furthermore, any optimal strategy of an $(n+1)\times(n+1)$ Tower of Hanoi game will necessarily result in an ideal state after $n+1$ moves. 
	We remark that determining optimal strategies for Tower of Hanoi games is a very active area of research with a history of long-standing problems \cite{bousch2014quatrieme,ottodunkelProblemsSolutions1941} and applications \cite{simon1975functional}. For more, we refer the reader to \cite{hinzTowerHanoiMyths2018}.

	We now present our main result connecting parking functions of displacement one, those almost as lucky as permutations, and ideal states of the Tower of Hanoi game, which can guarantee a minimal-move win.
	\begin{theorem}\label{thm:main}
		Regard $\emptyset = \ToH_{2} = \dPF{1}{1}$. The set $\ToH_{n+1}$ of distinct ideal states in the $(n+1)\times (n+1)$ Tower of Hanoi game and the set $\dPF{n}{1}$ of parking functions of length $n$ with displacement one are in bijection. Moreover, they are enumerated by the Lah numbers (\textcolor{blue}{\href{http://oeis.org/A001286}{A001286}}).
	\end{theorem}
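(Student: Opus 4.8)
The plan is to obtain both the enumeration and the bijection from explicit structural descriptions of the two sides. I would begin with parking functions. \textbf{Claim:} $\alpha=(a_1,\dots,a_n)\in\dPF{n}{1}$ if and only if there is an $s\in[n-1]:=\{1,\dots,n-1\}$ such that the multiset $\{a_1,\dots,a_n\}$ equals $\big([n]\setminus\{s+1\}\big)\uplus\{s\}$ --- that is, $s$ occurs twice, $s+1$ occurs zero times, and every other element of $[n]$ occurs once. For the ``if'' direction I would run the parking process: among the cars with preference $s$, the one arriving second finds spot $s$ taken and spot $s+1$ free (no car prefers $s+1$ and no car has yet been bumped), so it parks on $s+1$ with displacement one, while every other car parks on its preferred spot; hence $\alpha$ is a parking function with $d(\alpha)=1$. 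For the ``only if'' direction, $d(\alpha)=\sum_i k_i=1$ forces a unique car $c$ with $k_c=1$, so car $c$ prefers $s:=a_c\le n-1$ and parks on $s+1$; the remaining $n-1$ cars park on their (pairwise distinct) preferred spots, and these together with $s+1$ exhaust $[n]$, which pins the multiset down to the stated form. Given the Claim, the count is immediate: $n-1$ choices of $s$, and $\tfrac{n!}{2}$ orderings of the corresponding multiset, so $\lvert\dPF{n}{1}\rvert=(n-1)\cdot\tfrac{n!}{2}=\binom{n}{2}(n-1)!$.

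Next I would describe ideal states. Since the disks on any peg must decrease in size from bottom to top, an arrangement is determined by which disks sit on which peg; and in an ideal state disk $n$ is alone on peg $0$ while peg $n$ is empty. Thus an element of $\ToH_{n+1}$ is precisely the data of a placement of disks $\{0,1,\dots,n-1\}$ onto interior pegs $\{1,\dots,n-1\}$ with no peg empty, i.e., a surjection $f\colon\{0,1,\dots,n-1\}\to\{1,\dots,n-1\}$ (pigeonhole then forces exactly one peg to hold two disks). The number of surjections from an $n$-set onto an $(n-1)$-set is $\binom{n}{2}(n-1)!$ --- choose the two-element fiber, then biject the $n-1$ fibers with the $n-1$ pegs --- so $\lvert\ToH_{n+1}\rvert=\binom{n}{2}(n-1)!$ as well, which is the $n$-th term of \href{http://oeis.org/A001286}{A001286}; both sides are empty exactly for $n=1$, matching the convention in the statement.

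Finally I would upgrade the equality of cardinalities to an explicit bijection. Given $\alpha\in\dPF{n}{1}$ with doubled value $s$, the map $i\mapsto a_i$ is a surjection of the cars $[n]$ onto the preferred spots $[n]\setminus\{s+1\}$ whose fiber over $s$ has size two; relabel cars by $i\mapsto i-1$ and post-compose with the unique order-preserving bijection $\psi_s\colon[n]\setminus\{s+1\}\to\{1,\dots,n-1\}$ (which fixes $1,\dots,s$ and sends $v\mapsto v-1$ for $v\ge s+2$) to get a surjection $\{0,1,\dots,n-1\}\to\{1,\dots,n-1\}$, hence an ideal state. Since $\psi_s(s)=s$, the peg carrying two disks is peg $s$, so $s$ --- and therefore $\psi_s$, and therefore $\alpha$ --- is recovered from the ideal state (injectivity); and starting from an arbitrary ideal state, applying $\psi_s^{-1}$ to the disk-to-peg surjection produces a preference vector whose multiset is $\big([n]\setminus\{s+1\}\big)\uplus\{s\}$, hence lies in $\dPF{n}{1}$ by the Claim (surjectivity). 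The step I expect to be the real work is the ``only if'' half of the Claim: one must argue carefully that displacement exactly one, rather than merely positive, rigidly determines the preference multiset up to the single parameter $s$; the remaining pieces --- the surjection count and checking that $\psi_s$ and $\psi_s^{-1}$ match up so that the map and its inverse are well defined --- are routine.
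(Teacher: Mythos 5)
Your proposal is correct and follows essentially the same route as the paper: you characterize $\dPF{n}{1}$ by its preference multiset (the paper's Theorem~\ref{thm: pf}), characterize ideal states as surjections from the $n$ smaller disks onto the $n-1$ interior pegs (Theorem~\ref{thm: tower}), and your map $\psi_s$ is exactly the inverse of the paper's bijection $f$ (which shifts $x_i\mapsto x_i+1$ for $x_i>j$). The only differences are cosmetic: you derive the count $\binom{n}{2}(n-1)!=\frac{n!(n-1)}{2}$ directly from the surjection description, whereas the paper cites Klav\v{z}ar--Milutinovi\'{c}--Petr, and you spell out the injectivity/surjectivity check that the paper leaves as an observation.
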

	We organize the present paper as follows:
	The results of Section \ref{sec: pfn1}
	give characterizations of the sets
	$\dPF{n}{1}$ and $\ToH_{n+1}$. In Section \ref{sec: bijection}, we define an explicit bijection between $\dPF{n}{1}$ and $\ToH_{n+1}$, establishing Theorem \ref{thm:main}, and as a corollary we note that both sets are enumerated by the Lah numbers.
	
	\section{Characterizing the elements of \texorpdfstring{$\dPF{\lowercase{n}}{1}$}{displacement one parking functions} and \texorpdfstring{$\ToH_{\lowercase{n}+1}$}{ideal states}.}\label{sec: pfn1}

	We begin by giving a characterization for the elements of $\dPF{n}{1}$.
	\begin{theorem}{\label{thm: pf}}
		Let $\alpha=(a_1,\ldots,a_n)\in[n]^n$. Then $\alpha\in PF_n(1)$ if and only if
		$\alpha$ 
		satisfies:
		\begin{enumerate}
			\item there exists $j\in[ n-1]$ and distinct $k,k'\in [n]$ such that $a_k=a_{k'}=j$ and $a_i\neq j$ for any other index $i\neq k, k'$ and 
			\item $
			\{a_i:i\in[n] \mbox{ and }i\neq k, k'\}
			=
			[n]\setminus\{j,j+1\}$.
		\end{enumerate}
	\end{theorem}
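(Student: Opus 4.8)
The plan is to reduce the statement to a purely arithmetic condition on the entries of $\alpha$, using two standard facts about parking functions. The first is the \emph{displacement formula}: if $\alpha\in\PF{n}$, then the $n$ cars occupy exactly the spots $1,2,\ldots,n$, so the multiset of spots in which the cars park is $[n]$; since car $i$ contributes (its spot) $-\,a_i$ to the displacement, summing over $i$ gives
\[
d(\alpha)=\binom{n+1}{2}-\sum_{i=1}^n a_i .
\]
The second is the classical \emph{sorting criterion}: $\alpha\in\PF{n}$ if and only if the weakly increasing rearrangement $b_1\le b_2\le\cdots\le b_n$ of $(a_1,\ldots,a_n)$ satisfies $b_i\le i$ for all $i\in[n]$ (if a self-contained treatment is wanted, this admits a one-line proof from the fact that $\alpha\in\PF{n}$ iff $\#\{i:a_i\le m\}\ge m$ for every $m$). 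Combining the two, $\alpha\in\dPF{n}{1}$ if and only if $\alpha\in\PF{n}$ and $\sum_{i=1}^n a_i=\binom{n+1}{2}-1$.

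First I would prove the ``if'' direction. Assuming (1) and (2), the multiset of entries of $\alpha$ is $\{j,j\}\cup\big([n]\setminus\{j,j+1\}\big)$; its weakly increasing rearrangement is $1,2,\ldots,j-1,j,j,j+2,\ldots,n$, and one checks $b_i\le i$ for every $i$ (the only nonobvious case is $b_{j+1}=j\le j+1$), so $\alpha\in\PF{n}$. Then $\sum_i a_i=2j+\big(\binom{n+1}{2}-j-(j+1)\big)=\binom{n+1}{2}-1$, so $d(\alpha)=1$ by the displacement formula.

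For the ``only if'' direction, suppose $\alpha\in\PF{n}$ and $\sum_i a_i=\binom{n+1}{2}-1$, and let $b_1\le\cdots\le b_n$ be its sorted form. By the sorting criterion each $i-b_i$ is a nonnegative integer, and $\sum_{i}(i-b_i)=\binom{n+1}{2}-\sum_i a_i=1$; hence there is a unique index $i_0$ with $i_0-b_{i_0}=1$ and $b_i=i$ for all $i\ne i_0$. Since $b_1\ge 1$ we must have $i_0\ge 2$, so $j:=i_0-1\in[n-1]$, and the sorted sequence is $1,\ldots,j-1,j,j,j+2,\ldots,n$: the value $j$ appears exactly twice, the value $j+1$ is absent, and every other value of $[n]$ appears exactly once. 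Taking $k,k'$ to be the two indices with $a_k=a_{k'}=j$ yields (1), and deleting these two entries leaves the set $[n]\setminus\{j,j+1\}$, which is (2).

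I expect the only real obstacle to be the structural step in the ``only if'' direction --- converting the single unit of total slack $\sum_i(i-b_i)=1$ into the precise shape of the sorted sequence and verifying the boundary constraint $i_0\ge 2$ (equivalently $j\le n-1$, which is exactly what makes $j+1$ a legal spot). Everything else is bookkeeping. If one prefers to avoid invoking the sorting criterion, the ``only if'' direction can instead be argued directly from the parking process: $d(\alpha)=1$ forces exactly one car to be bumped and by exactly one spot, say car $k$ lands in spot $a_k+1$ (so $a_k\le n-1$); all other cars park at their preferences, so the occupied spots give $\{a_i:i\ne k\}=[n]\setminus\{a_k+1\}$, whence exactly one other index $k'$ has $a_{k'}=a_k$ (two cars cannot share a spot) and the conclusions follow with $j=a_k$.
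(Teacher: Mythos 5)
Your argument is correct, but it takes a genuinely different route from the paper. The paper works directly with the parking process: it isolates the unique unlucky car, observes it must share its preference $j$ with exactly one lucky car, argues $j\neq n$ and that no car may prefer $j+1$ (else the displacement would exceed one), and concludes that the remaining $n-2$ cars occupy the remaining $n-2$ spots; the converse is checked by simply running the cars through the street. You instead reduce everything to two global facts --- the identity $d(\alpha)=\binom{n+1}{2}-\sum_i a_i$ valid for any parking function, and the classical sorted criterion $b_i\le i$ --- so that membership in $\dPF{n}{1}$ becomes ``parking function with preference sum $\binom{n+1}{2}-1$,'' and the structure of the sorted sequence falls out of distributing one unit of slack among the nonnegative quantities $i-b_i$. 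Both proofs are complete; each step of yours checks out, including the boundary argument $b_1=1\Rightarrow i_0\ge 2$, which is exactly the paper's observation that $j\neq n$ in disguise. What your approach buys is generality and mechanizability: the same slack-counting argument characterizes $\dPF{n}{d}$ for any $d$ in terms of compositions of $d$ into the gaps $i-b_i$, whereas the paper's car-by-car reasoning is tailored to $d=1$. What it costs is self-containment --- you must import (or prove) the displacement formula and the sorting criterion, neither of which the paper needs. Your closing remark, arguing the ``only if'' direction directly from the parking process, is essentially the paper's own proof.
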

	\begin{proof}
		Begin with the case $\alpha=(a_1,\ldots,a_n)\in\dPF{n}{1}$. 
		This indicates that there is exactly one unlucky car $k \in [n]$, and it has displacement one.
		Suppose the unlucky car prefers spot $j$, i.e., $a_k = j$.
		Note that the unlucky car must share a preference with exactly one of the lucky cars. 
		Suppose car $k' \in [n]$ is the car with this preference, i.e., $a_{k} = a_{k'} = j$ and $k \neq k'$.
		Observe that $j\in[n-1]$, otherwise, if $j=n$, then the unlucky car would be unable to park. Hence condition 1 is satisfied.
		Moreover, since $\alpha\in\dPF{n}{1}$, we know that the unlucky car is displaced by one spot and parks in spot $j+1$, specifically. Had any car preferred spot $j+1$, then the unlucky car would have been bumped down the road more than one spot, or there would have been another unlucky car under $\alpha$. Either scenario would imply $d(\alpha) \geq 2$, which is in contradiction to $\alpha\in\dPF{n}{1}$. Since the $n-2$ cars unequal to $k$ or $k'$ park in the $n-2$ spots different from $j$ or $j+1$, condition 2 is satisfied.
		This completes the forward direction.
		
		For the converse it suffices to show that such a preference vector lies in $\dPF{n}{1}$.
		Conditions 1 and 2 guarantee that only one entry in $\alpha$ is repeated, and this value is repeated exactly once. 
		This means that there are $n-2$ cars preferring separate parking spots, and each of these cars parks in accordance to its preference, contributing zero to the displacement.
		We now consider the two distinct cars $k, k' \in [n]$ that have the same preference $j$. 
		Assume without loss of generality that $k'<k$, implying that car $k'$ parks in spot $j$ and contributes zero to the displacement. 
		Now condition 2 ensures that spot $j+1$ is an available parking spot. Consequently, car $k$, being the other car with preference $j$, finds spot $j$ occupied by car $k'$ and parks in spot $j+1$, thereby contributing one to the displacement. Thus $\alpha\in\dPF{n}{1}$.
	\end{proof}

	Now turning to the Tower of Hanoi problem, we begin by setting the stage for describing ideal states. A state is denoted by \[x=(x_0,x_1,\ldots,x_{n})\] where, for each $0\leq i\leq n$, disk $i$ is located on peg $x_i$. 
	We make the assumption that at any point a vector $x$ with repeated entries satisfies the condition that the disks on the same peg are ordered with the smallest on the top and the largest disk on the bottom.
	For example, $x=(0,0,0,0)$ means all disks are on peg 0 (in order from smallest on top to largest at the bottom).
	Figure \ref{fig: anidealstate} provides a visualization of the ideal state $(2,2,1,0)$ of the $4 \times 4$ Tower of Hanoi game.
	
	\begin{figure}[ht]
		\centering
		\includegraphics[width=0.4\linewidth]{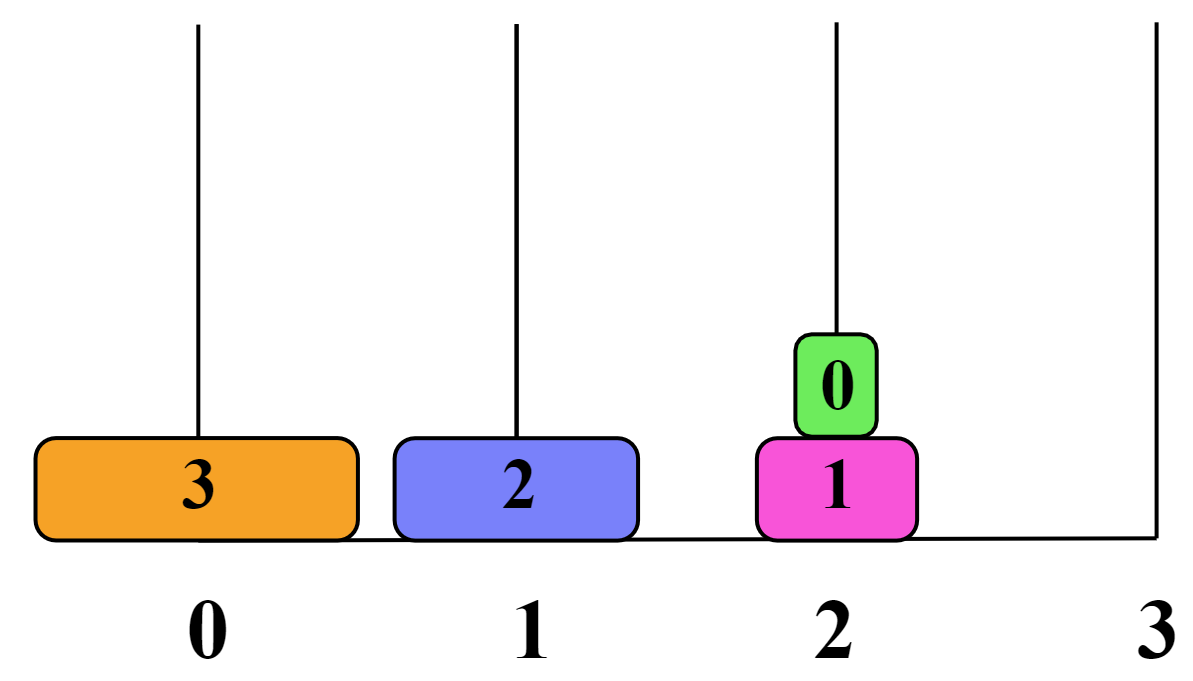}
		\caption{The ideal state $(2,2,1,0)$ in the $(3+1) \times (3+1)$ Tower of Hanoi game.}
		\label{fig: anidealstate}
	\end{figure}
	
	Let $n=3$. In Figure~\ref{fig: 4x4ideal}, we illustrate the six different ideal states and how we arrive at these arrangements in four moves.  In Figure \ref{fig: treegraph}, we use the vector notation for states to give a new visualization of Figure \ref{fig: 4x4ideal}.
	\begin{figure}[ht]
		\centering
		\includegraphics[width=0.73\linewidth]{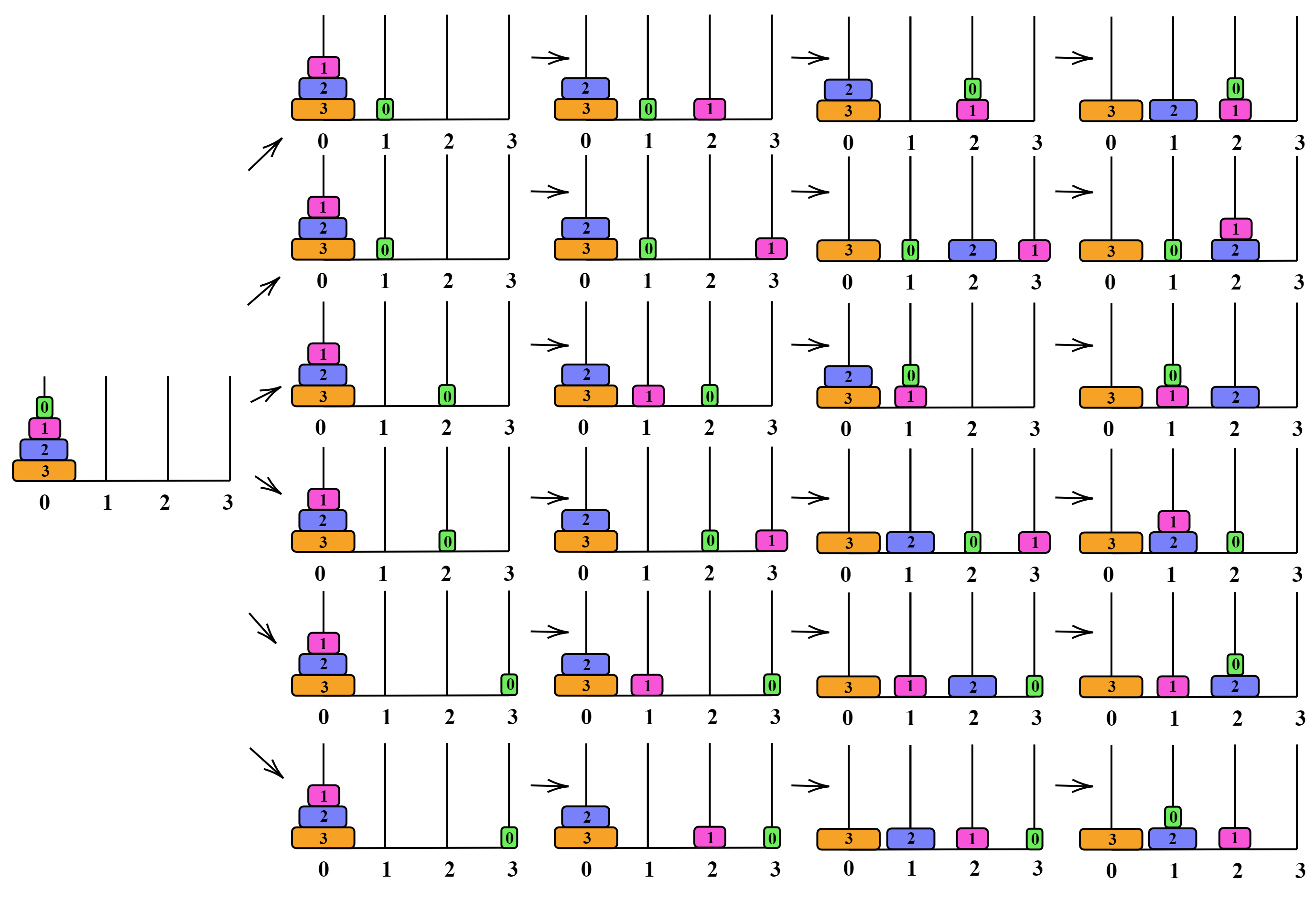}
		\caption{Ideal states in $\ToH_{3+1}$.}
		\label{fig: 4x4ideal}
	\end{figure}
	
	\begin{figure}[ht]
		\centering
		\includegraphics[scale = 0.24]{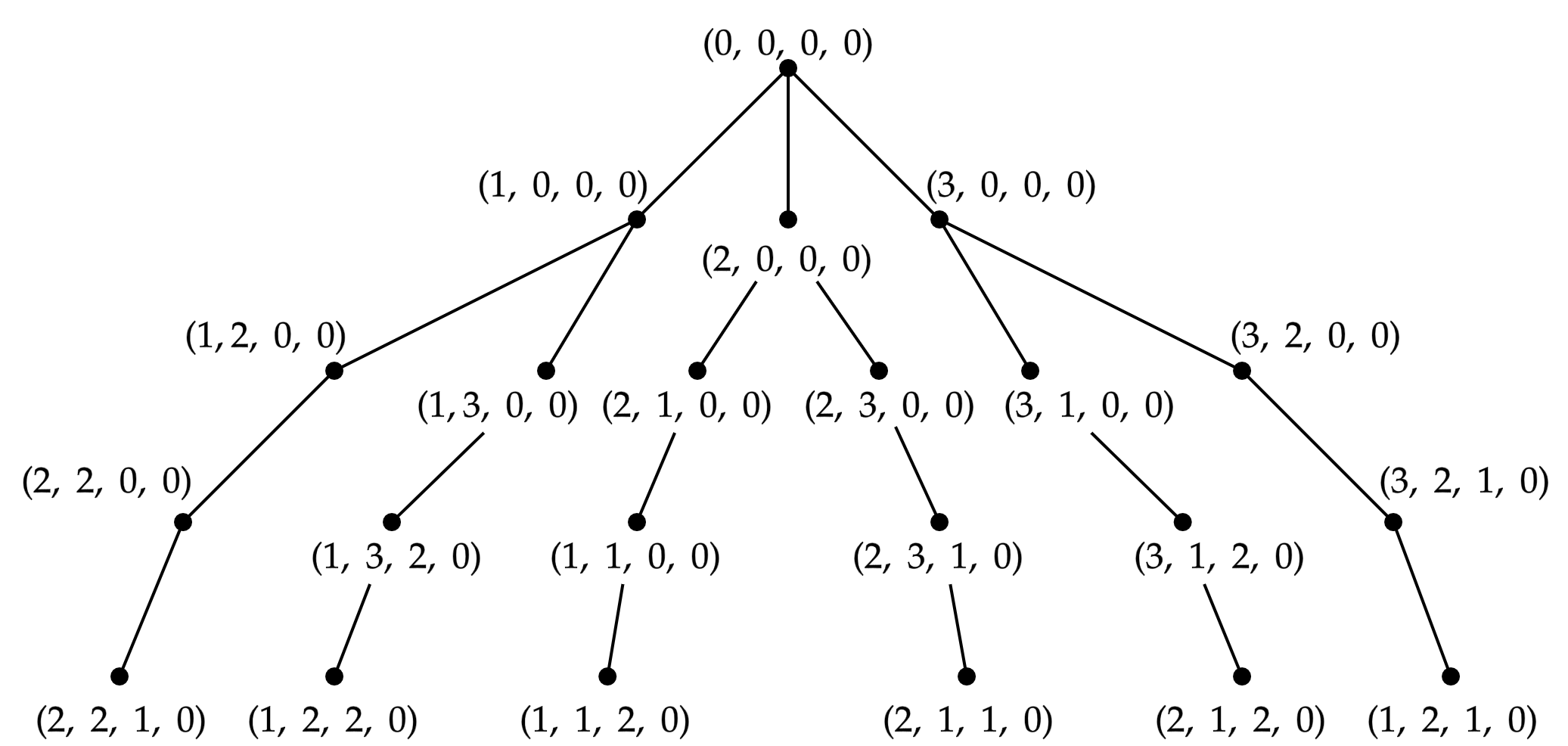}
		\caption{Tree graph of ideal states in a $(3+1)\times (3+1)$ Tower of Hanoi game.}
		\label{fig: treegraph}
	\end{figure}
	
	Note that from top to bottom the ideal states illustrated in Figure~\ref{fig: 4x4ideal} match precisely with the vectors from left to right of the tree in Figure \ref{fig: treegraph}.
	Each ``level'' in the graph represents one move, i.e., the movement of a single disk.
	Moreover, each node of the graph is labeled by the state denoting the arrangement of the disks on the pegs. 
	The six unique ideal states are represented with vectors at the end of the tree, and exactly four moves are used to get to each of these ideal states.
	
	We are now able to give a characterization of the vectors that encode the ideal states of the Tower of Hanoi.

	\begin{theorem}\label{thm: tower}
		Let $x=(x_0,x_1,\ldots,x_n)\in\{0,1,2,\ldots,n\}^{n+1}$. Then $x\in \ToH_{n+1}$ if and only if $x$ satisfies:
		\begin{enumerate}
			\setcounter{enumi}{-1}
			\item $x_n=0$,
			\item there exists $j\in[n-1]$ and distinct $k,k' \in [n-1]$ with $x_k = x_{k'} = j$, and
			\item if $i \neq k, k'$, then $x_i \neq j$ and 
			$
			\{x_i:0\leq i\leq n-1 \mbox{ and }i\neq k, k'\} 
			=
			[n-1]\setminus\{j\}$. 
		\end{enumerate}
	\end{theorem}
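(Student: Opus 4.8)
The plan is to prove the equivalence by directly translating between the vector notation and the physical description of an ideal state. Since $x=(x_0,\ldots,x_n)$ records exactly which peg each disk occupies --- and the stated convention forces the smallest-on-top ordering within a peg --- a state is completely determined by its vector, so $\ToH_{n+1}$ \emph{is} the set of vectors encoding arrangements that meet the three bullet points in the definition of an ideal state. The whole argument is then an unpacking of those three properties, in both directions, together with one pigeonhole count.

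For the ``if'' direction, assume $x$ satisfies (0)--(2). Conditions (1) and (2) together place every disk $0,1,\ldots,n-1$ on an interior peg: disks $k$ and $k'$ sit on peg $j\in[n-1]$, and the remaining disks occupy the interior pegs $[n-1]\setminus\{j\}$. Hence no disk smaller than $n$ is on peg $0$, so with (0) the largest disk is alone on the source peg; no $x_i$ equals $n$, so the destination peg is empty, while pegs $0,1,\ldots,n-1$ are each occupied (peg $0$ by disk $n$, peg $j$ by disks $k,k'$, and each remaining interior peg by exactly one disk, since (2) supplies a bijection from the other disks onto $[n-1]\setminus\{j\}$), so the destination peg is the \emph{only} empty one; and peg $j$ carries two disks while every other interior peg carries one. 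These are precisely the three defining properties, so $x\in\ToH_{n+1}$.

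For the ``only if'' direction, assume $x$ encodes an ideal state. Since disk $n$ is alone on the source peg, $x_n=0$ --- this is (0) --- and $x_i\neq 0$ for all $i<n$, so the $n$ disks $0,1,\ldots,n-1$ all lie on the $n-1$ interior pegs; and since the destination peg is the only empty peg, none of those interior pegs is empty. By pigeonhole, exactly one interior peg, say $j$, holds two disks $k,k'$ and each other interior peg holds exactly one, which is (1). This also yields (2): no disk besides $k,k'$ is on peg $j$, and the map $i\mapsto x_i$ on the remaining $n-2$ disks takes values in $[n-1]\setminus\{j\}$ and is injective (those disks occupy distinct pegs), hence a bijection onto it.

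There is no genuine obstacle here --- the statement is essentially a dictionary between the two descriptions --- but two points deserve care. Condition (0) by itself does not assert that disk $n$ is \emph{alone} on peg $0$; that conclusion becomes available only after (1) and (2) push every smaller disk onto an interior peg, so one must resist reading more into (0) than it states. And the step converting ``every interior peg nonempty'' into ``exactly one doubled interior peg'' is the pigeonhole count for $n$ disks among $n-1$ pegs; it is worth recording that the same count, together with $x_n=0$ and the absence of the value $n$ among the $x_i$, is what certifies that the destination peg is the unique empty peg.
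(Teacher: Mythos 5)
Your proposal is correct and follows essentially the same route as the paper: both directions are a direct unpacking of the dictionary between the vector encoding and the three defining properties of an ideal state, with the pigeonhole count supplying the unique doubled interior peg. Your version is somewhat more explicit than the paper's (in particular about why disk $n$ is \emph{alone} on peg $0$ and why the destination peg is the \emph{unique} empty peg), but the underlying argument is the same.
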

	
	\begin{proof}
		In the forward direction: Recall that if $x$ is an ideal state, then disk $n$ is on peg 0; peg $n$ is empty; and, pegs 1 through $n-1$ each have a single disk, with the exception of one peg having two disks. 
		Suppose that peg $j$ is the peg with two distinct disks $k$ and $k'$.
		In vector notation, the ideal state $x=(x_0,x_1,\ldots,x_n)$, where $x_i$ denotes the peg that disk $i$ is on, would satisfy:
		$x_n=0$,
		$n$ is not an entry of $x$,
		$j$ appears exactly twice in $x$ at indices $k$ and $k'$,
		and $\{x_0,x_1,\ldots,x_{n-1}\}\setminus\{x_k,x_{k'}\}=[n-1]\setminus\{j\}$. 
		These are the required conditions, establishing the forward direction. 
		
		In the opposite direction: Note that condition (0) ensures that disk $n$ lies on peg 0. Conditions (1) and (2) ensure that there exists a unique interior peg containing two disks. 
		Condition (2) also ensures that
		each of the remaining $n-2$ interior pegs has exactly one disk from the set of disks labeled $\{0,1,2,\ldots, n-1\}\setminus\{k,k'\}$. 
		Thus the vector $x$ represents an ideal state in $\ToH_{n+1}$, as desired.
	\end{proof}

	\section{Bijection.}\label{sec: bijection}
	
	We now establish our main theorem. 
	\begin{proof}[Proof of Theorem~\ref{thm:main}.]
		We define a function from $\ToH_{n+1}\subset\{0,1,2,\ldots, n\}^{n+1}$ to $\dPF{n}{1}\subset[n]^n$.
		Let \[f: \ToH_{n+1} \rightarrow PF_n(1)\] be such that if $(x_0,x_1,\ldots,x_{n-1},0)\in \ToH_{n+1}$, with repeated value $j\in[n-1]$, then \[f((x_0,x_1,\ldots,x_{n-1},0))=(a_1,a_2,\ldots,a_n)\] with, for $0\leq i\leq n-1$, \[a_{i+1}=
		\begin{cases}
			x_i + 1 & \text{ if } x_i > j \\
			x_i & \text{ if } x_i \leq j. \\
		\end{cases}
		\]
		It is observed from Theorems \ref{thm: pf} and \ref{thm: tower} that $f$ is injective and surjective.
	\end{proof}
	We conclude this work by referencing the result of Klav\v{z}ar, Milutinovi\'{c}, and Petr in \cite[Proposition 2]{klavvzar2005hanoi}, which establishes that the number of ideal states of the Tower of Hanoi is given by the Lah numbers. By Theorem \ref{thm:main} this also gives the number of parking functions with displacement one. 
	\begin{theorem}\label{thm:THcount}
		For $n\geq 1$, $|\ToH_{n+1}|=|\dPF{n}{1}|=\frac{n!(n-1)}{2}$.
	\end{theorem}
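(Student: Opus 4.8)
The plan is to count one of the two sets directly from its characterization and then transport the count across the bijection $f$ of Theorem~\ref{thm:main}. I would work with $\dPF{n}{1}$, invoking Theorem~\ref{thm: pf}.

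By Theorem~\ref{thm: pf}, specifying an element $\alpha\in\dPF{n}{1}$ amounts to three choices: first, the repeated value $j\in[n-1]$ (there are $n-1$ options); second, the unordered pair $\{k,k'\}$ of indices at which $j$ occurs, of which there are $\binom{n}{2}$; and third, a bijection from the remaining $n-2$ indices onto the set $[n]\setminus\{j,j+1\}$ forced by condition~2, of which there are $(n-2)!$. Conversely, any such triple of choices yields, via the recipe of Theorem~\ref{thm: pf}, a preference vector satisfying conditions~1 and~2, hence an element of $\dPF{n}{1}$; and distinct triples yield distinct vectors, since $j$ is recovered as the unique repeated value of $\alpha$, the pair $\{k,k'\}$ as the set of indices achieving it, and the bijection as the restriction of $\alpha$ to the complementary indices. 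Multiplying,
\[
|\dPF{n}{1}| = (n-1)\binom{n}{2}(n-2)! = (n-1)\cdot\frac{n!}{2!\,(n-2)!}\cdot(n-2)! = \frac{n!(n-1)}{2}.
\]

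It then remains to observe that Theorem~\ref{thm:main} furnishes a bijection $\ToH_{n+1}\leftrightarrow\dPF{n}{1}$, whence $|\ToH_{n+1}| = |\dPF{n}{1}| = \frac{n!(n-1)}{2}$; the degenerate case $n=1$ is checked separately, where the formula returns $0$, consistent with the convention $\emptyset=\ToH_{2}=\dPF{1}{1}$. Finally, one identifies $\frac{n!(n-1)}{2}$ with the Lah number $L(n,2)=\binom{n-1}{1}\frac{n!}{2!}$, i.e.\ the sequence \href{http://oeis.org/A001286}{A001286}. (Equivalently, one could enumerate $\ToH_{n+1}$ directly from Theorem~\ref{thm: tower} --- choosing the doubled peg $j\in[n-1]$, the unordered pair of disks $\{k,k'\}\subseteq[n-1]$ resting on it, and a bijection from the remaining $n-2$ disk labels onto $[n-1]\setminus\{j\}$ --- and arrive at the same product; or one could simply cite \cite[Proposition 2]{klavvzar2005hanoi} for the ideal-state count and pull it back through $f$.)

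I do not anticipate a genuine obstacle here: the argument is essentially bookkeeping. The one point demanding care is the independence and non-redundancy of the three choices --- in particular, that the pair $\{k,k'\}$ must be counted as \emph{unordered} (the two cars share the same preference, so transposing them changes nothing) and that $j$, $\{k,k'\}$, and the residual assignment are each uniquely reconstructible from $\alpha$, so that no parking function is tallied twice. Both facts follow immediately from the ``if and only if'' of Theorem~\ref{thm: pf}, but they should be stated explicitly to make the enumeration airtight.
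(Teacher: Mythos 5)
Your proof is correct, but it takes a genuinely different route from the paper. The paper does not enumerate anything itself: it simply cites Klav\v{z}ar, Milutinovi\'{c}, and Petr \cite[Proposition 2]{klavvzar2005hanoi} for the fact that $|\ToH_{n+1}|$ is the Lah number $\frac{n!(n-1)}{2}$ and then pulls that count through the bijection of Theorem~\ref{thm:main} to obtain $|\dPF{n}{1}|$ --- exactly the ``or one could simply cite'' option you relegate to your final parenthesis. You instead count $\dPF{n}{1}$ directly from the characterization in Theorem~\ref{thm: pf}, via the product $(n-1)\binom{n}{2}(n-2)!$, and transport the count in the opposite direction. Your version has the advantage of making the enumeration self-contained (and of independently reproving the cited ideal-state count), at the cost of the bookkeeping you describe; the paper's version is shorter but leans on an external reference. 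Your factorization and the uniqueness-of-reconstruction argument are sound, and the $n=1$ check is a nice touch. One small caution about your parenthetical alternative count of $\ToH_{n+1}$: the two disks sharing peg $j$ are drawn from the $n$ disks labeled $0,1,\ldots,n-1$ (disk $0$ can be one of them, as in the state $(2,2,1,0)$ of Figure~\ref{fig: anidealstate}), so the pair should be an unordered pair from an $n$-element set, giving $\binom{n}{2}$; taking $\{k,k'\}\subseteq[n-1]$ literally, as the statement of Theorem~\ref{thm: tower} might suggest, would give $\binom{n-1}{2}$ and the wrong total. This does not affect your main argument, which runs entirely through $\dPF{n}{1}$.
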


	\begin{acknowledgment}{Acknowledgments.}
		Authors Y.~Aguillon, D.~Alvarenga, P.~E. Harris, J.C.~Mart\'{i}nez Mori, C.~Monroe, C.~Tieu, and D.~A.~Williams II were supported by the National Science Foundation Grant No. DMS-1659138 and the Sloan Grant No. G-2020-12592. P.~E.~Harris was supported through a Karen Uhlenbeck EDGE Fellowship.
	\end{acknowledgment}

	\end{document}